\newtheorem{thm}{Theorem}
\newtheorem{lemma}[thm]{Lemma}
\begin{document}

\title{Schur indices in GAP: {\tt wedderga 4.6+} }

\author{Allen Herman}
\address{Department of Mathematics and Statistics, University of Regina, Regina, Canada, S4S 0A2}
\email{aherman@math.uregina.ca}

\thanks{The author's research is supported by an NSERC Discovery Grant.  Computing facilities for this project were provided by the University of Regina's {\it Laboratory for Computational Discovery}: {\tt http://www.lcd.uregina.ca} }



\begin{abstract} 
We describe a algorithms and their implementations that calculate local and global Schur indices of ordinary irreducible characters of finite groups, cyclotomic algebras over abelian number fields, and rational quaternion algebras.   These functions are available with the latest release of the GAP package {\tt wedderga}, versions 4.6 and higher.  
\end{abstract}

\maketitle

\section{Introduction}

The GAP package {\tt wedderga} \cite{W}, originally released in 2006, features an algorithm for obtaining a presentation of the Wedderburn decomposition of the group algebra of a finite group $G$ over $F$, where $F$ is either a finite field or an abelian number field.   Simple components of the Wedderburn decomposition obtained using {\tt wedderga} appear as cyclotomic crossed product algebras; i.e. crossed products over cyclotomic extensions of $F$ whose factor set is expressed entirely in terms of roots of unity.  The {\tt wedderga} package achieves this presentation of the Wedderburn decomposition of $FG$ using a search algorithm based on the Brauer-Witt theorem.   It searches for suitable subgroups of $G$ that can be used to generate the $p$-part of the simple component corresponding to a given irreducible character of $G$ for all necessary primes $p$, and then suitably glues the cyclotomic algebras associated to these subgroups together.  For a full description of the method, see \cite{OdR}. 

While the ability to find an expression of the Wedderburn decomposition of a group algebra in GAP as a direct sum of matrix rings over cyclotomic algebras is an attractive feature of {\tt wedderga}, the fact that it offered no features for identifying the division algebra parts of the cyclotomic algebras has been an obvious shortcoming.  Users have been left on their own to attempt the delicate calculations of the Schur indices of cyclotomic algebras produced by {\tt wedderga} whenever they need a precise Wedderburn decomposition of the group algebra, such as would be necessary to determine the unit group or automorphism group of $FG$ in a straightforward manner.   

This has been the major motivation for this project, the main part of which implements additional functions in {\tt wedderga} that calculate local Schur indices of cyclotomic algebras.  As a result of the successful implementation of these functions, we are able to provide a Wedderburn decomposition function 

\smallskip
\centerline{ {\tt WedderburnDecompositionWithDivAlgParts(GroupRing(F,G));} }

\smallskip
\noindent that outputs a full Wedderburn decomposition of a group algebra of a finite group $G$ over an abelian number field $F$ in terms of a direct sum of matrices over division algebras.  The additional functions {\tt LocalIndicesOfCyclotomicAlgebra}, {\tt SchurIndex}, and {\tt SchurIndexByCharacter} allow the user to identify the division algebra parts of simple components of group algebras in terms of local indices at rational primes, and to calculate Schur indices of cyclotomic algebras and group characters over any abelian number field.  As part of the same package, we have also provided new functions for calculating Schur indices of quaternion algebras over the field of rational numbers, and tools for converting between quaternion algebras and cyclic algebras.

\medskip
We use the general procedure for calculating local Schur indices of irreducible characters of finite groups that was developed by Bill Unger and Gabriele Nebe in 2006.  Their implementation has been available in MAGMA (Versions 2.14 and up) since 2009 \cite{M}.   For a given finite group $G$ and irreducible character $\chi \in Irr(G)$, it uses the Frobenius-Schur indicator to calculate the local index at $\infty$, and for the local index of $\chi$ at a finite rational prime $q$, it uses these three steps: 

\smallskip
Step 1: (Brauer-Witt search.) For each prime $p$ dividing $\chi(1)$, find a minimal subgroup (i.e. a Schur group) $H$ and $\xi \in Irr(H)$ that isolates the $p$-part of the $q$-local index of $\chi$. 

\smallskip
Step 2: ($q$-modular characters.) If the $q$-defect group of $\xi$ is cyclic, use Benard's theorem on characters in blocks with cyclic defect group \cite{B} to obtain the $q$-local index of $\xi$. 

\smallskip
Step 3: (Dyadic Schur groups.) If the $q$-defect group of $\xi \in Irr(H)$ is not cyclic, then it will be the case that $q=2$,  and one can apply Riese and Schmid's classification of dyadic Schur groups (see \cite{RS} and \cite{S}) to obtain the $2$-local index of $\xi$. 

Aside from these three basic steps, all of the code for the GAP implementation has been created independently.  In addition to characters, it can also accommodate cyclotomic algebras, and makes use of shortcut algorithms for calculating local indices of cyclic cyclotomic algebras.  One of these shortcut procedures is an alternative way to compute the local index of a cyclic cyclotomic algebra at an infinite prime without using the Frobenius-Schur indicator. For the local index of a cyclic cyclotomic algebra at a finite prime, we implement methods due to Janusz \cite{J}.  

The notation used for cyclotomic algebras in {\tt wedderga} will be explained in Section 2, followed by the necessary background on Schur indices of simple components of group algebras in Section 3.  The shortcut algorithms for computing local indices of cyclic cyclotomic algebras are explained in Section 4, along with an explanation of new GAP functions we needed for calculating cyclotomic reciprocity parameters for cyclotomic number fields.  In Section 5, we describe most of the elements for the general procedure.  After showing how we find an irreducible character of a group that realizes a given cyclotomic algebra, we explain our implementation of the Frobenius-Schur indicator for the local index at infinity and of Benard's theorem for the $q$-local index of an irreducible character lying in a block with cyclic $q$-defect group.  These methods leave one exceptional situation, so the final step needed to calculate a $2$-local index using Riese and Schmid's classification of dyadic Schur groups is described in Section 6.    In Section 7, we describe additional features of the package that calculate local indices of rational quaternion algebras using the Legendre symbol procedure, that allow one to decompose a cyclotomic algebra into the tensor product of two cyclic algebras, and that enable the user to convert between cyclic cyclotomic algebras, cyclic algebras, and quaternion algebras.  

The current implementation of the Schur index functions in {\tt wedderga} has been tested and performs adequately on all groups of order up to 511.  We provide descriptions of possible future enhancements to the program that may prove to be necessary for larger groups.   
 
\section{Cyclotomic algebras in {\tt wedderga}} 

Let $F$ be an abelian number field.  This means that $F$ is a subfield of cyclotomic extension $\mathbb{Q}(\zeta_n)$ of $\mathbb{Q}$.   We will write $\zeta_n$ for a primitive complex $n$-th root of unity ({\tt E(n)} in GAP), and $\sigma_b$ for the $\mathbb{Q}$-linear automorphism of $\mathbb{Q}(\zeta_n)$ that sends $\zeta_n$ to $\zeta_n^b$.  This map is denoted by {\tt GaloisCyc(F,b)} in GAP.   Information about the Wedderburn decomposition of the group algebra of a finite group $G$ over $F$ is produced with the {\tt wedderga} commands 

\smallskip
{\tt WedderburnDecompositionInfo(GroupRing(F,G)); } 

\noindent - which gives a list of all the simple components of $FG$, or 

\smallskip
{\tt SimpleAlgebraByCharacterInfo(GroupRing(F,G),chi);} 

\noindent - which gives the particular simple component of $FG$ corresponding to an ordinary irreducible character {\tt chi} of $G$. 

\smallskip
\noindent The individual simple components resulting from these {\tt -Info} functions appear in one of the following forms:  

1) {\tt [r,F]}, which means the ring of $r \times r$ matrices over $F$; 

\smallskip
2) {\tt [r,F,n,[a,b,c]]}, which means the ring of $r \times r$ matrices over the {\it cyclic cyclotomic algebra} $(F(\zeta_n)/F, \sigma_b, \zeta_n^c) := \oplus_i F(\zeta_n) u^i$, where $u$ commutes with elements of $F$, $ u^a = \zeta_n^c$, and $u \zeta_n = \zeta_n^b u;$

\smallskip
3) {\tt [r,F,n,[[a$_1$,b$_1$,c$_1$],[a$_2$,b$_2$,c$_2$]],[[d]]]}, which means the ring of $r \times r$ matrices over the  cyclotomic algebra $(F(\zeta_n)/F, f) = \oplus_i,j F(\zeta_n) u^i v^j$, where $u$ and $v$ commute with $F$,  $ u^{a_1} =\zeta_n^{c_1}$, $u \zeta_n = \zeta_n^{b_1}u$, $v^{a_2} =\zeta_n^{c_2}$, $v \zeta_n = \zeta_n^{b_2} v$,  and $vu = uv \zeta_n^d$; 

\smallskip
4) {\tt [r,F,n,[[a$_1$,b$_1$,c$_1$],[a$_2$,b$_2$,c$_2$],[a$_3$,b$_3$,c$_3$]],[[d$_{12}$,d$_{13}$],[d$_{23}$]]]}, which means the ring of $r \times r$ matrices over the  cyclotomic algebra $(F(\zeta_n)/F, f) = \oplus_i,j F(\zeta_n) u^i v^j w^k$, where $u$, $v$, and $w$ commute with $F$, 
$$\begin{array}{rlrlrl} 
u^{a_1} &=\zeta_n^{c_1}, \qquad & v^{a_2} &=\zeta_n^{c_2}, \qquad & w^{a_3} &=\zeta_n^{c_3}, \\
u \zeta_n &= \zeta_n^{b_1}u, \qquad & v \zeta_n &= \zeta_n^{b_2} v,  \qquad & w \zeta_n &= \zeta_n^{b_3} v,  \\ 
vu &= uv \zeta_n^{d_{12}}, \qquad & wu &=uw \zeta_n^{d_{13}}, \mbox{ and} & wv &=vw \zeta_n^{d_{23}}.  
\end{array}$$
etc.  Cyclotomic algebras whose factor sets require more than 3 generators can (theoretically) be produced by {\tt wedderga}'s {\tt-Info} functions.  These would appear in the form one would expect based on the above pattern.

\section{Schur indices for simple components of group algebras } 

\medskip
Any simple component of the group algebra $FG$ of a finite group $G$ over an abelian number field $F$ will be equal to a principal ideal $FGe$ of $FG$ corresponding to a centrally primitive idempotent $e$.  This idempotent determines a specific Galois conjugacy class of complex irreducible characters of $G$ with $\chi(e) \ne 0$, and since $F$ has characteristic $0$ the center of the simple component is isomorphic to the field of character values $F(\chi)$ for any of these characters $\chi$.  Since $FGe$ is a central simple $F(\chi)$-algebra, it is isomorphic to $M_r(D)$, the ring of $r \times r$ matrices over a division algebra $D$.  We call $D$ the {\it division algebra part} of the simple component.  The {\it Schur index} of the simple component (aka. the Schur index of $D$ or of $\chi$ over $F$) is $m_F(\chi) =\sqrt{[D:F(\chi)]}$, the square root of the dimension of the division algebra $D$ over its center.  So it is an essential invariant for a finite-dimensional division algebra that measures its noncommutativity.
     
It is a consequence of the Brauer-Witt theorem that any simple component of the group ring of a finite group over an abelian number field will be (Morita) equivalent to a cyclotomic algebra over a (possibly larger) abelian number field $F$ (see \cite{Y}).  This means that the simple component $FGe$ corresponding to $\chi \in Irr(G)$ can be expressed as a matrix ring over a cyclotomic algebra with center $F(\chi)$, which the initial Wedderburn decomposition functions in {\tt wedderga} provided.  These cyclotomic algebras are themselves central simple algebras over $F(\chi)$ with the same division algebra part as $FGe$.   

Since its center $F(\chi)$ is an algebraic number field, the isomorphism type of $D$ as an $F(\chi)$-algebra is determined by its list of $\mathcal{P}$-local invariants, one for each prime $\mathcal{P}$ of the number field $F(\chi)$.  Each local invariant is a lowest-terms fraction $s/m_{\mathcal{P}}$ in $\mathbb{Q}$ modulo $\mathbb{Z}$, the local invariant at any infinite prime of $F(\chi)$ can only be $0$ or $\frac12$, all but finitely many of the local invariants are equivalent to $0$, and the sum of all the local invariants also must be $0$ \cite{R}.   The denominator $m_{\mathcal{P}}$ of the local invariant at $\mathcal{P}$ is called the $\mathcal{P}$-local index of $D$.  This is the Schur index of the simple algebra formed by the $\mathcal{P}$-adic completion of $D$.  The global Schur index of $D$ at $F(\chi)$ or $F$ is the least common multiple of these local indices.  

The division algebra parts of simple components of the group rings of finite groups over abelian number fields are restricted quite a bit by the {\it Benard-Schacher theorem}. 

\begin{thm} \cite{BS}  Suppose $D$ is the division algebra part of a simple component of the group algebra of a finite group $G$ over an abelian number field $F$. 

i) As $\mathcal{P}$ runs over the set of primes lying over the same (infinite or finite) rational prime $p$, the local indices $m_{\mathcal{P}}$ for $D$ are all equal to the same positive integer, which we call the {\it $p$-local index} of $D$. 

ii) The pattern of local invariants at the primes lying over a common rational prime is uniformly distributed: 
$$ b \cdot inv_{\mathcal{P}^{\sigma_b}}(D) \equiv inv_{\mathcal{P}}(D) \mod \mathbb{Z}, \forall \sigma_b \in Gal(\mathbb{Q}(\zeta_n)/\mathbb{Q}). $$
(This essentially means by knowing one invariant above $p$ you can determine the others.) 

iii) The Schur index of $D$ is at most the maximum order of a root of unity in $F(\chi)$.   
\end{thm}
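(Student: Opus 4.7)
My plan is to prove part (ii) first and derive parts (i) and (iii) from it. The first step is to invoke the Brauer-Witt theorem (as discussed earlier in the excerpt) to replace $D$ by a Brauer-equivalent cyclotomic algebra $A = (L/F(\chi), f)$ with $L = F(\chi)(\zeta_n)$ and factor set $f$ taking values in $\langle \zeta_n \rangle$. Since $[A] = [D]$ in the Brauer group $\mathrm{Br}(F(\chi))$, the two algebras share all local invariants, so it suffices to work with $A$.

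For part (ii), the key is the following cohomological identity: for any $\sigma_b \in \mathrm{Gal}(\mathbb{Q}(\zeta_n)/\mathbb{Q})$ (which restricts to an automorphism of $F(\chi)$, since $F(\chi)$ is abelian over $\mathbb{Q}$), the twisted algebra $A^{\sigma_b} := A \otimes_{F(\chi),\,\sigma_b} F(\chi)$ satisfies $[A^{\sigma_b}] = b \cdot [A]$ in $\mathrm{Br}(F(\chi))$. To see this, I extend $\sigma_b|_{F(\chi)}$ to $L$ by sending $\zeta_n \mapsto \zeta_n^b$; since $f$ takes values in $\langle \zeta_n \rangle$, applying this extended automorphism to $f$ yields the factor set $f^b$, which represents $b \cdot [A]$. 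Combined with the standard functoriality of local invariants, $\mathrm{inv}_{\mathcal{P}^{\sigma_b}}(A^{\sigma_b}) = \mathrm{inv}_{\mathcal{P}}(A)$, this produces $b \cdot \mathrm{inv}_{\mathcal{P}^{\sigma_b}}(D) \equiv \mathrm{inv}_{\mathcal{P}}(D) \pmod{\mathbb{Z}}$, which is part (ii).

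Part (i) follows quickly from (ii). Since $F(\chi)$ is contained in a cyclotomic field, it is Galois over $\mathbb{Q}$, so $\mathrm{Gal}(F(\chi)/\mathbb{Q})$ acts transitively on the primes of $F(\chi)$ lying above any given rational prime $p$. Writing $\mathcal{P}' = \mathcal{P}^{\sigma_b}$ and comparing the denominators in lowest terms on the two sides of the identity in (ii) forces $m_{\mathcal{P}} \mid m_{\mathcal{P}'}$; applying the same argument to the inverse Galois element gives the reverse divisibility, so $m_{\mathcal{P}} = m_{\mathcal{P}'}$. For part (iii), write $\mathrm{inv}_{\mathcal{P}}(D) = s/m$ in lowest terms with $m = m_{\mathcal{P}}$, and choose $N$ so that $F(\chi) \subseteq \mathbb{Q}(\zeta_N)$. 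Any $\sigma_b \in \mathrm{Gal}(\mathbb{Q}(\zeta_N)/F(\chi))$ fixes $F(\chi)$ pointwise and hence fixes $\mathcal{P}$, so (ii) gives $(b-1)s \equiv 0 \pmod{m}$, i.e.\ $b \equiv 1 \pmod{m}$. Therefore $\mathrm{Gal}(\mathbb{Q}(\zeta_N)/F(\chi)) \subseteq \mathrm{Gal}(\mathbb{Q}(\zeta_N)/\mathbb{Q}(\zeta_m))$, and the Galois correspondence gives $\mathbb{Q}(\zeta_m) \subseteq F(\chi)$. Applying this to each local index and taking the lcm yields $\mathbb{Q}(\zeta_m) \subseteq F(\chi)$ for the global Schur index $m$, completing part (iii).

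The main obstacle I expect is the cohomological identity $[A^{\sigma_b}] = b \cdot [A]$ in the Brauer group. One must choose the extension of $\sigma_b|_{F(\chi)}$ to $L$ carefully, treat the degenerate case where $\zeta_n$ already lies in $F(\chi)$ so that $L = F(\chi)$ and the crossed-product description collapses, and verify at the cocycle level that raising $f$ to the $b$-th power is really the image of $[A]$ under the base-change map on $\mathrm{Br}(F(\chi))$. The rest --- transitivity of the Galois action on primes above $p$, elementary manipulations with lowest-term fractions in $\mathbb{Q}/\mathbb{Z}$, and the Galois correspondence --- is routine.
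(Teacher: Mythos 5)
The paper does not prove this statement; it is quoted as the Benard--Schacher theorem and attributed to \cite{BS}, so there is no internal proof to compare against. Your argument is, in substance, the classical proof from the literature (Benard--Schacher, also exposed in Yamada \cite{Y}): reduce to a cyclotomic algebra $A=(L/F(\chi),f)$ via Brauer--Witt, establish $[A^{\sigma_b}]=b\cdot[A]$ from the fact that the factor set takes values in roots of unity, combine with functoriality of the Hasse invariant to get (ii), and deduce (i) from transitivity of the Galois action on primes over $p$ and (iii) from the fact that the decomposition group of $\mathcal{P}$ in $Gal(\mathbb{Q}(\zeta_N)/F(\chi))$ must fix $\zeta_{m_{\mathcal{P}}}$, forcing $\mathbb{Q}(\zeta_m)\subseteq F(\chi)$ (which is the sharper statement underlying (iii)). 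The outline is correct, and you have correctly identified the one genuinely delicate point, namely the cocycle identity $f^{\tilde\tau}=f^{\,b}$; in carrying it out you should note explicitly that (a) $L/\mathbb{Q}$ is abelian, so the conjugation action of the extended automorphism $\tilde\tau$ on $Gal(L/K)$ is trivial and the transported factor set is literally $\tilde\tau\circ f$, and (b) the modulus $N$ defining the Galois group in (ii) and (iii) must be taken divisible both by the conductor of $L$ and by the local indices $m_{\mathcal{P}}$ --- the latter is automatic because $m_{\mathcal{P}}$ divides the order of the group generated by the values of $f$, since $n'[A]=[(L/K,f^{n'})]=0$. With those two remarks supplied, the proof is complete and is essentially the standard one.
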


There are even further restrictions on the $p$-local indices of division algebra parts of simple components of group algebras.  Especially, the $p$-local index is $1$ for any finite prime $p$ not dividing $|G|$, the $p$-local index is at most $p-1$ for an odd prime $p$, and the $2$-local index can be at most $2$ (see \cite{J} or \cite{Y}). 

\medskip
The goal of the Schur index functions in {\tt wedderga} is to enable the user to identify the division algebra part of a cyclotomic algebra in terms of its $p$-local indices, and from these obtain its Schur index.   Although the list of local indices does not identify the division algebra up to ring isomorphism in general, it does do so when the Schur index is at most $2$ or when the defining group is ``small enough''.  For example, this is true for faithful characters of all groups of order less than 819, since this is the least order of a finite group that can produce $p$-local indices equal to $3$ at two distinct odd primes $p$.   

\section{Shortcuts for Schur indices of cyclic cyclotomic algebras} 

When a simple component produced by {\tt wedderga} has the presentation 

\smallskip
\centerline{ {\tt [r,F,n,[a,b,c]]},}

\smallskip
\noindent it is a matrix ring over the {\it cyclic cyclotomic algebra} $A=(F(\zeta_n)/F, \sigma_b, \zeta_n^c)$.   The local indices of these functions can be calculated directly from this presentation using three shortcut algorithms. 

\medskip
\noindent 1. {\tt LocalIndexAtInfty}.  A shortcut for calculating the local index of $A$ at an infinite prime is given by the following lemma.

\begin{lemma} 
A cyclic cyclotomic algebra $A=(F(\zeta_n)/F, \sigma_b, \zeta_n^c)$ has local index $2$ at an infinite prime if and only if $F \subset \mathbb{R}$, $n>2$, and $\zeta_n^c=-1$. 
\end{lemma}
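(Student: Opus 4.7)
The plan is to reduce the computation to the Brauer group of $\mathbb{R}$, which has only two classes, so that the only question is whether the completion of $A$ at an infinite prime is split (index $1$) or equivalent to Hamilton's quaternions $\mathbb{H}$ (index $2$).

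First I would dispense with the ``only if'' direction by three easy observations. Since $F$ is abelian over $\mathbb{Q}$, it is either totally real or totally complex, and in the totally complex case every archimedean completion is $\mathbb{C}$, whose Brauer group is trivial, forcing local index $1$; hence $F\subset\mathbb{R}$ is necessary. Similarly, when $n\leq 2$ we have $\zeta_n\in\mathbb{Q}\subset F$, so $F(\zeta_n)=F$, the cyclic extension is trivial, and $A$ collapses to $F$ itself with local index $1$ everywhere. This forces $n>2$. Finally, $u^a=\zeta_n^c$ is central and therefore lies in $F$; under $F\subset\mathbb{R}$ the only real roots of unity are $\pm 1$, so the only way to produce a nonsplit archimedean completion is $\zeta_n^c=-1$.

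For the ``if'' direction I would fix a real embedding $v$ of $F$ and compute $A\otimes_F F_v$ as a central simple $\mathbb{R}$-algebra. Because $\zeta_n\notin\mathbb{R}$ for $n>2$, every place $w$ of $F(\zeta_n)$ above $v$ is complex, giving local degree $f=2$ and $g=a/2$ places above $v$; in particular $a=[F(\zeta_n):F]$ is even, and the decomposition group of any $w\mid v$ inside the cyclic group $\langle\sigma_b\rangle$ is the unique subgroup of order two, namely $\langle\sigma_b^{a/2}\rangle$, which acts on $F(\zeta_n)_w\cong\mathbb{C}$ as complex conjugation. The standard local--global description of cyclic algebras then identifies $A\otimes_F F_v$ up to Morita equivalence with the local cyclic algebra $(\mathbb{C}/\mathbb{R},\,c,\,\zeta_n^c)$, which is $M_2(\mathbb{R})$ if $\zeta_n^c=1$ and $\mathbb{H}$ if $\zeta_n^c=-1$. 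Combined with the previous paragraph, this establishes the equivalence in the lemma.

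The main obstacle is the last reduction, i.e.\ passing from the global factor set, expressed with the full Galois generator $\sigma_b$ and the relation $u^a=\zeta_n^c$, to the local factor set involving only complex conjugation and the same scalar $\zeta_n^c$. I would justify it by decomposing $F(\zeta_n)\otimes_F\mathbb{R}\cong\mathbb{C}^{a/2}$ and observing that $\langle\sigma_b\rangle$ permutes the $a/2$ factors transitively with stabilizer $\langle\sigma_b^{a/2}\rangle$; projecting $A\otimes_F\mathbb{R}$ onto a single $\mathbb{C}$-factor collapses the outer symmetry and leaves the element $u^{a/2}$ as the generator of a $\mathbb{C}/\mathbb{R}$ cyclic algebra with defining relation $(u^{a/2})^2=u^a=\zeta_n^c$, yielding the claimed Morita equivalence.
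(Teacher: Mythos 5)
Your proposal is correct and follows essentially the same route as the paper: reduce to the completion at a real place, observe that $[F(\zeta_n):F]=a$ is even with $\sigma_b^{a/2}$ inducing complex conjugation, identify $A\otimes_F\mathbb{R}$ up to Morita equivalence with $(\mathbb{C}/\mathbb{R},\sigma_b^{a/2},\zeta_n^c)$, and conclude that the index is $2$ exactly when $\zeta_n^c=-1$. You supply somewhat more detail than the paper (the decomposition $F(\zeta_n)\otimes_F\mathbb{R}\cong\mathbb{C}^{a/2}$ and the explicit handling of $n\le 2$), but the underlying argument is the same.
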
 

\begin{proof} 
$F$ must be a real subfield, for otherwise the center of the completion of $A$ at an infinite prime would be $\mathbb{C}$ and no Schur index other than $1$ would be possible.  When $F$ is a real subfield and $n>2$, $[F(\zeta_n):F]=a$ is even, and  $\sigma_b^{a/2}$ will induce the nontrivial automorphism of order $2$ in $Gal(F(\zeta_n)/(F(\zeta_n) \cap \mathbb{R}))$.  This will induce complex conjugation in $\mathbb{C}/\mathbb{R}$, so $A \otimes \mathbb{R}$ will be Morita equivalent to  $(\mathbb{C}/\mathbb{R}, \sigma_b^{a/2}, \zeta_n^c)$.  Since $\zeta_n^c$ has to be a root of unity in $\mathbb{R}$, this algebra will have index $2$ only when $\zeta_n^c=-1$.
\end{proof} 

Simply checking these conditions allows us to calculate the real Schur index of a cyclic cyclotomic algebra without using the Frobenius-Schur indicator of a irreducible character, which would require obtaining a faithful character of a group that can be represented by the cyclotomic algebra $A$ and thus be a bit more expensive.  Functions that calculate the local index using the traditional character-theoretic approach are described in sections 5 and 6.  These can be applied to cyclic cyclotomic algebras directly, which makes it possible to check results obtained using the shortcut methods.   

\medskip 
\noindent 2. {\tt LocalIndexAtOddP}.  The shortcut for the local index of $A=(F(\zeta_n)/F, \sigma_b, \zeta_n^c)$ at an odd prime $p$ makes use of the following lemma of Janusz. 

\begin{lemma}[\cite{J}, Lemma 3.1]
Let $E$ be a Galois extension of a local field $K$ with ramification index $e=e(E/K)$.  Suppose $n$ is relatively prime to $p$ with $\zeta_n \in K$.  Then $\zeta_n$ is a norm in $E/K$ if and only if $\zeta_n$ is the $e$-th power of a root of unity in $K$.  
\end{lemma}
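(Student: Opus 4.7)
The plan is to factor $E/K$ through its maximal unramified subextension $L$, so that $[L:K]=f$ and $[E:L]=e$, and to reduce the whole question to a computation on residue fields. The leverage comes from the hypothesis $\gcd(n,p)=1$: this guarantees that $\zeta_n$ is a $p'$-root of unity and hence coincides with the Teichm\"uller lift of its reduction $\bar\zeta_n$ in the residue field $\ell$ of $E$. Once that identification is made, checking whether $\zeta_n$ is a norm becomes a question about images in $\ell^*$, where everything is governed by the norm map of finite fields.

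For the sufficiency direction, suppose $\mu\in K$ is a root of unity with $\mu^e=\zeta_n$. The key input is that for the unramified extension $L/K$ the norm map $N_{L/K}:\mathcal{O}_L^*\to\mathcal{O}_K^*$ is surjective (surjectivity on residue units, combined with the familiar fact that $N_{L/K}:1+\mathfrak{p}_L\twoheadrightarrow 1+\mathfrak{p}_K$ proved layer by layer via the trace). Pick $\beta\in\mathcal{O}_L^*$ with $N_{L/K}(\beta)=\mu$. Since $\beta\in L$, we have $N_{E/L}(\beta)=\beta^{[E:L]}=\beta^e$, and so
$N_{E/K}(\beta)=N_{L/K}(\beta^e)=\mu^e=\zeta_n$,
exhibiting $\zeta_n$ as a norm.

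For the necessity direction, suppose $\zeta_n=N_{E/K}(\alpha)$ for some $\alpha\in E^*$. Using $v_K\circ N_{E/K}=f\cdot v_E$ on $E^*$, the fact that $v_K(\zeta_n)=0$ forces $\alpha\in\mathcal{O}_E^*$, so $\alpha$ has a reduction $\bar\alpha\in\ell^*$. Now the inertia subgroup $I=\mathrm{Gal}(E/L)$ of order $e$ acts trivially on $\ell$, so reducing the product $N_{E/K}(\alpha)=\prod_{\sigma\in\mathrm{Gal}(E/K)}\sigma(\alpha)$ groups the $ef$ factors into $|I|$-fold collapses indexed by $\mathrm{Gal}(\ell/k)$. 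This yields $\bar\zeta_n=N_{\ell/k}(\bar\alpha)^e$ in $\ell^*$. Let $\mu\in K$ be the Teichm\"uller lift of $N_{\ell/k}(\bar\alpha)$; then $\mu^e$ is the Teichm\"uller lift of $\bar\zeta_n$. Because $\gcd(n,p)=1$, $\zeta_n$ itself is a $p'$-root of unity and therefore equals its own Teichm\"uller lift, giving $\zeta_n=\mu^e$ with $\mu$ a root of unity in $K$.

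The main obstacle is not really conceptual; it is ensuring clean access to the two standard facts being invoked: surjectivity of the norm on the unit group of an unramified local extension (used in the converse), and the Teichm\"uller identification of prime-to-$p$ roots of unity between $K$ and its residue field (used to upgrade residue-field equality to an honest equality in $K$). Both are standard for local fields with finite residue field; absent a citation, one would have to filter the units by $1+\mathfrak{p}^j$ and unwind the computation via the trace map plus $p$-adic completeness, which is routine but lengthens the argument.
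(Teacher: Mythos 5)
Your argument is correct. Note first that the paper does not prove this statement at all: it is quoted verbatim from Janusz (\cite{J}, Lemma 3.1) and used as a black box, so there is no in-paper proof to compare against. What you have written is the standard proof of Janusz's lemma, and it holds together: you split $E/K$ at the maximal unramified subextension $L$, get sufficiency from surjectivity of $N_{L/K}$ on units together with $N_{E/L}(\beta)=\beta^{e}$ for $\beta\in L$, and get necessity by observing that $\zeta_n$ is a unit norm, reducing the norm product modulo the maximal ideal so that the inertia group collapses it to $N_{\ell/k}(\bar\alpha)^{e}$, and then lifting back via Teichm\"uller representatives. The two facts you invoke without proof --- surjectivity of the norm on the unit group of an unramified extension of local fields, and the fact that reduction is a bijection from the prime-to-$p$ roots of unity of $K$ onto $k^{*}$ (equivalently, that $1+\mathfrak{p}_K$ contains no nontrivial prime-to-$p$ roots of unity) --- are exactly the right ones, are standard for local fields with finite residue field (which is the setting of the paper, where $K$ is a completion of a number field), and you flag them honestly. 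Two minor remarks: your sufficiency direction does not actually use that $E/K$ is Galois, only the tower $K\subseteq L\subseteq E$; and in the necessity direction it is worth saying explicitly that the $\mu$ you produce is a $(q-1)$-st root of unity of $K$, which is what makes it ``a root of unity in $K$'' as the statement requires.
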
 

This lemma means that one can calculate the $p$-local index of $A$ by counting the roots of unity in a $p$-adic completion of $F$.  To do this, we use the fact that the number of roots of unity of order coprime to $p$ in a $p$-adic completion of $F$ is $p^f-1$, where $f=f(F/\mathbb{Q})$ is the residue degree of $F/\mathbb{Q}$ at the prime $p$.  In order to use these facts, we have implemented (new!) cyclotomic reciprocity calculations in GAP.  These enable us to find, for $F \subseteq K \subseteq \mathbb{Q}(\zeta_n)$, the ramification index $e=e(K/F)$, the residue degree $f=f(K/F)$, and the splitting degree $g=g(K/F)$ at the prime $p$.   Once we have found $e=e(F(\zeta_n)/F)$ and $f=f(F/\mathbb{Q})$, it is immediate from Janusz's Lemma that the $p$-local index of $A = (F(\zeta_n)/F, \sigma_b, \zeta_n^c)$ for an odd prime $p$ is simply the least power of $\zeta_n^c$ that is in the group generated by $\zeta_{p^f-1}^e$.  

\medskip
\noindent {\bf Cyclotomic reciprocity in GAP.} Our cyclotomic reciprocity calculations begin with the calculation of $e$, $f$, and $g$ for the extension $\mathbb{Q}(\zeta_n)/F$, including the case $F = \mathbb{Q}$.  Let $n_{p'}$ be the $p'$-part of $n$, the largest divisor of $n$ that is coprime to $p$, and let $n_p$ be the largest power of $p$ dividing $n$.   The Galois group of $\mathbb{Q}(\zeta_n)/\mathbb{Q}$ is the set of all $\sigma_b$ with $b$ coprime to $n$, and $Gal(\mathbb{Q}(\zeta_n)/F)$ is the subgroup consisting of those $\sigma_b$'s that fix a primitive element of $F$.   Let $B$ be the set of integers $b$ modulo $n$ corresponding to the $\sigma_b$'s in $Gal(\mathbb{Q}(\zeta_n)/F)$.  Reduce the elements of the set $B$ modulo $n_{p'}$ to get the set $\bar{B}$.  The size of this set is $fg$, and $|B|/|\bar{B}| = e$.   Next, let $U$ be the set of distinct powers of $p$ modulo $n_{p'}$.  The size of the intersection of $U$ and $\bar{B}$ is $f$.  The subfield of $F(\zeta_{n_{p'}})$ fixed by these $\sigma_b$'s is the maximal subextension of $\mathbb{Q}(\zeta_n)/F$ that is totally split at the prime $p$.  Its dimension over $F$ is $g$.  (As this field is needed for the general algorithm later on, the command {\tt PSplitSubextension(F,n,p)} produces it directly.)  

Once we calculate the field parameters $e$, $f$, and $g$ for the extension $\mathbb{Q}(\zeta_n)/\mathbb{Q}$, we then calculate the $e'$, $f'$, and $g'$ for the extension $\mathbb{Q}(\zeta_n)/F$.  Since these are abelian Galois extensions of $\mathbb{Q}$, the parameters $e''$, $f''$, and $g''$ for $F/\mathbb{Q}$ are then just the ratios $e'' = e/e'$, $f''=f/f'$, and $g''=g/g'$.   The parameters for general extensions $K/F$ for $K \subseteq \mathbb{Q}(\zeta_n)$ are obtained similarly from the parameters of $\mathbb{Q}(\zeta_n)/K$ and $\mathbb{Q}(\zeta_n)/F$.

\medskip
\noindent 3. {\tt LocalIndexAtTwo.}  The shortcut algorithm for the $2$-local index of cyclic cyclotomic algebras is also based on the ideas in \cite{J}.  It requires the following observation.

\begin{lemma} 
Suppose the cyclic cyclotomic algebra $A = (F(\zeta_n)/F,\sigma_b,\zeta_n^c)$ over an abelian number field $F$ has $2$-local index $2$.  Then any $2$-adic completion of $A$ is Morita equivalent to a non-split quaternion algebra over a $2$-adic completion of $F$. 
\end{lemma}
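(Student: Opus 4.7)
The plan is to combine the hypothesis on the 2-local index with the classification of central simple algebras over local fields, noting that the center of the cyclic cyclotomic algebra $A = (F(\zeta_n)/F,\sigma_b,\zeta_n^c)$ is exactly $F$.

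First I would fix a prime $\mathcal{P}$ of $F$ lying above $2$ and consider the $\mathcal{P}$-adic completion $A_{\mathcal{P}} := A \otimes_F F_{\mathcal{P}}$. Since $A$ is a central simple $F$-algebra, $A_{\mathcal{P}}$ is a central simple algebra over the non-archimedean local field $F_{\mathcal{P}}$. By the definition of local index recalled in Section 3, the Schur index of $A_{\mathcal{P}}$ equals $m_{\mathcal{P}}$; and by the Benard--Schacher theorem (Theorem 1(i)), the integer $m_{\mathcal{P}}$ agrees with the common 2-local index, which is assumed to be $2$.

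Next I would invoke the classical fact that every central simple algebra over a non-archimedean local field is cyclic, with the division algebra part of dimension equal to the square of the Schur index over the center. Applied to $A_{\mathcal{P}}$, this forces the division algebra part $D_{\mathcal{P}}$ to satisfy $[D_{\mathcal{P}}:F_{\mathcal{P}}] = 4$, so $D_{\mathcal{P}}$ is a quaternion algebra over $F_{\mathcal{P}}$. Since the Schur index is $2$ (not $1$), $D_{\mathcal{P}}$ is a non-split quaternion algebra, and $A_{\mathcal{P}} \cong M_r(D_{\mathcal{P}})$ is therefore Morita equivalent to $D_{\mathcal{P}}$, as claimed.

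There is essentially no obstacle to this argument: the content of the lemma is a direct translation of the local Brauer group description (where any class of order $2$ is represented by a quaternion division algebra) into the language of cyclotomic algebras, so the only care needed is to cite the correct structural facts for central simple algebras over local fields and to use Benard--Schacher to ensure the statement holds uniformly at every prime above $2$.
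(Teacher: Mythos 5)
Your argument is correct for the statement as literally written, but it is a genuinely different and much weaker route than the paper's. You observe that over a non-archimedean local field the Brauer class of index $2$ is represented by the unique quaternion division algebra, so ``index $2$'' and ``Morita equivalent to a non-split quaternion algebra'' are formally synonymous; granting that $A$ is central simple over $F$ and that the completion at any $\mathcal{P}\mid 2$ has index equal to the $2$-local index (Benard--Schacher gives the uniformity over all such $\mathcal{P}$), the conclusion follows from the structure theory alone. The paper's proof instead works constructively from the cyclotomic presentation: it shows $4\mid n$ (else $L/K$ is unramified and $\zeta_n^c$ is a norm), splits off the odd part of the factor set, invokes Witt's theorem to get $\zeta_4\notin K$ and hence $\zeta_{n_2}^c=-1$, strips the odd-degree part of $L/K$ via Reiner's Theorem (30.10), and uses cyclicity of the local Galois group to pin the remaining extension down to $K(\zeta_4)/K$, concluding $A_K\sim(K(\zeta_4)/K,\bar\sigma,-1)$. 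That explicit presentation --- not the bare existence of a quaternion division algebra in the class --- is what the lemma is for: Steps 1--4 of the shortcut algorithm that follows test exactly the conditions ($4\mid n$, $\zeta_4\notin K$, the order of $\zeta_n^c$ is twice an odd number, odd residue degree) extracted in the paper's proof. So your approach buys brevity and generality but would not support the algorithm the lemma is meant to justify; if you want your proof to serve the same purpose, you would still need to carry out the reduction to the explicit cyclic presentation $(K(\zeta_4)/K,\bar\sigma,-1)$.
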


\begin{proof} 
Let $K$ be the completion of $F$ at a prime of $F$ lying over $2$, and let $L=K(\zeta_n)$.  Then $A_K = A \otimes_F K \simeq (L/K, \sigma_b^g, \zeta_n^c)$ is a cyclic algebra over a $2$-local field with index $2$.  If it were the case that $4$ did not divide $n$, then $L/K$ would be unramified, forcing $\zeta_n^c$ to be a norm in $L/K$ and $A_K$ to have index $1$. So it must be the case that $4$ divides $n$.  

Find primitive $n_2$ and $n_{2'}$ roots of unity for which $\zeta_n = \zeta_{n_2} \zeta_{n_{2'}}$.  We can decompose $A_K$ as the tensor product 
$$ A_K \simeq (L/K, \sigma_b^g, \zeta_{n_2}^c) \otimes_K (L/K, \sigma_b^g, \zeta_{n_{2'}}^c),$$
and the second factor must be split because its factor set has odd order and its local index must divide $2$.  So $A_K$ is equivalent to a cyclic cyclotomic algebra whose factor set is contained in $2$-power roots of unity.  

By a theorem of Witt (see \cite[Proposition 3.2]{J}), $\zeta_4 \not\in K$.  Since $\zeta_{n_2}^c \in K$, in order for the index of $A_K$ to be $2$ we must have $\zeta_{n_2}^c = -1$.   

If $F$ is the maximal subextension of $L$ containing $K$ such that $[L:F] = \ell$ is odd, then by \cite[Theorem (30.10)]{R}, 
$$ (L/K, \sigma,-1) \simeq (F/K,\bar{\sigma},-1). $$
Now, if it were the case that $K(\zeta_4)$ would be properly contained in $F$, then there would be a subfield $F_1$ of $F$ containing $K$ that is linearly disjoint from $K(\zeta_4)$.  Since $[F_1:K]$ would be a power of $2$, the Galois group of $[F:K]$ would not be cyclic, a contradiction.   Therefore, $A_K$ is equivalent to a nonsplit quaternion algebra $(K(\zeta_4)/K,\bar{\sigma},-1)$.  
\end{proof} 

For the shortcut algorithm for the $2$-local index of a cyclic cyclotomic algebra $A=(F(\zeta_n)/F,\sigma_b,\zeta_n^c)$, we use these steps: 

\smallskip
\noindent Step 1:  find the maximal $2$-split subextension $K$ of $F(\zeta_n)/F$ and determine $A_K = (F(\zeta_n)/K,\sigma_b^g,\zeta_n^c)$; 

\smallskip
\noindent Step 2: arrange for $n$ (and $c$) to be minimal with $A_K = (K(\zeta_n)/K,\sigma_b^g,\zeta_n^c)$; 

\smallskip
\noindent Step 3: check that $4$ divides $n$, $\zeta_4 \not\in K$, and the order of $\zeta_n^c$ is twice an odd number; and

\smallskip
\noindent Step 4: show that the residue degree $f = f(K/\mathbb{Q})$ is odd (otherwise the quaternion algebra over $K$ would split at the prime $2$).


\smallskip
\noindent The $2$-local index will be $2$ only if the conditions of steps 3 and 4 are satisfied.  

\section{Computing local indices using ordinary and modular characters}

Since the shortcuts in the previous section can only be applied to cyclic cyclotomic algebras, a general method must be applied to non-cyclic cyclotomic algebras, which {\tt wedderga}'s {\tt -Info} functions represent as lists of length 5.  For these algebras, a traditional approach using character-theoretic information is effective.   The theory behind these algorithms is fairly well-established.   

Let $A$ be a cyclotomic algebra with center an abelian number field $F$.   One must first use the presentation of the cyclotomic algebra to extract a faithful irreducible character of a group that naturally defines it.  Then we apply the character-theoretic methods to this group and character.  In most cases we are thinking of, the cyclotomic algebra $A$ will not be cyclic cyclotomic, but if it is these functions can still be applied.  

\smallskip
\noindent 1. {\tt DefiningGroupOfCyclotomicAlgebra.}  This constructs a natural quotient of a free group directly from the cyclotomic algebra presentation of $A$.  For example, if 
$A = ${\tt [r,F,n,[[a$_1$,b$_1$,c$_1$],[a$_2$,b$_2$,c$_2$]],[[d]]]} then the defining group $G$ is the quotient group of the free group on 3 generators $x$, $y_1$, and $y_2$ defined using these relations: 

$$\begin{array}{rclrclrcl} 
x^n &=&1, & {y_1}^{a_1} &=& x^{c_1}, & {y_2}^{a_2} &=& x^{c_2}, \\
x^{y_1} &=& x^{b_1}, & x^{y_2} &=& x^{b_2}, & y_2y_1 &=& y_1y_2x^d. 
\end{array}$$

To expedite calculations later on, we immediately determine a polycyclic presentation of $G$.  This is done using the GAP command  {\tt IsomorphismSpecialPcGroup}.   It follows naturally from the cyclotomic algebra presentation of $A$ that these defining groups are always cyclic-by-abelian, and therefore GAP can find their polycyclic representations effectively.  The polycyclic presentation of $G$ aids with the calculation of its character table.    

\medskip
\noindent 2. {\tt DefiningCharacterOfCyclotomicAlgebra.}  Let the defining group of $A$ be $G$.  The defining character will be a faithful irreducible character $\chi$ of $G$ for which the simple component of $FG$ corresponding to $\chi$ is $A$.  The function identifies the character $\chi$ simply by returning an integer $s$ for which the GAP character ${\tt Irr(G)[s]}$ is one of the Galois conjugates of $\chi$.  

\medskip
\noindent {\bf Remark.} Even when {\tt Irr(G)[s]} is the defining character of $A$, the presentation returned by a new call of 

\smallskip
\centerline{{\tt SimpleAlgebraByCharacterInfo(GroupRing(F,G),Irr(G)[s])}}

\smallskip
\noindent will often not match that of the original presentation of $A$ exactly because of the randomised methods that may be used by GAP to calculate $Irr(G)$.   However, the Brauer-Witt approach used by {\tt wedderga} ensures that the cyclic-by-abelian defining group appearing in the construction is a minimal one necessary to obtain a presentation of the simple component over the field $F$.   This minimality ensures that there is only one Galois conjugacy class of faithful irreducible characters of $G$ whose simple component is realized as a crossed product of the extension $F(\zeta_n)/F(\chi)$ being acted on by its Galois group, and thus good candidates for $s$ are easily found with search of the character table of $G$.   To see that there is only one Galois conjugacy class of these characters, note that every faithful irreducible character $\chi$ of $G$ is induced from a faithful irreducible character $\lambda$ of a maximal cyclic normal subgroup $C$ of $G$.  If $\phi$ is any other faithful character of $G$, then $\phi$ also is induced from a faithful irreducible character of $C$, which must be equal to $\lambda^\sigma$, for some $\sigma \in Gal(F(\zeta_n)/F)$.  It then follows that $\phi^\sigma=\chi$. 

\medskip
\noindent 3. {\tt LocalIndexAtInftyByCharacter.}  The standard method for calculating the local index at $\infty$ of the simple component of $\mathbb{Q}G$ corresponding to $\chi$ is to use the Frobenius-Schur indicator of $\chi$.  If $\chi$ is represented in GAP by the character {\tt Irr(G)[s]}, then the Frobenius-Schur indicator of $\chi$ is the result of 

\smallskip
\centerline{{\tt Indicator(CharacterTable(G),2)[s]}.}

\smallskip
\noindent The local index of $\chi$ at $\infty$ is $2$ exactly when this value is $-1$. 

\medskip
\noindent 4. {\tt LocalIndexAtPByBrauerCharacter.}  Let $G$ be the defining group and $\chi$ be the defining character of $A$.  If $G$ happens to be nilpotent, it is well-known that the only possibilities for a simple component of $G$ to have non-trivial division algebra part occur when the component is equivalent to the ordinary quaternion algebra.  Therefore, these cases will result in cyclic cyclotomic algebras that can be handled by the shortcut method.  If $G$ is not nilpotent, then it will still be cyclic-by-abelian, with its order divisible by at least two primes.   Brauer characters of such groups are quite accessible by means of the Fong-Swan theorem.  Indeed, if the $p$-defect group of the block containing $\chi$ is cyclic, we can apply the following theorem of Benard.

\begin{thm}[\cite{B}] 
Suppose $\chi \in Irr(G)$ lies in a $p$-block $B$ of $G$ for which the $p$-defect group of $B$ is cyclic.  Let $\chi_o$ be the restriction of $\chi$ to the set of $p$-regular elements of $G$, and let $\phi$ be an irreducible Brauer character lying in the block $B$.  

Then the $p$-local index of $\chi$ is $[\mathbb{Q}_p(\chi_o,\phi) : \mathbb{Q}_p(\chi_o)]$.    
\end{thm}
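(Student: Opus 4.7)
The plan is to reduce the problem to a local calculation at the prime $p$ and then invoke the classical theory of blocks with cyclic defect groups. By definition the $p$-local index of $\chi$ is the Schur index of the simple $\mathbb{Q}_p$-algebra cut out by $\chi$ from $\mathbb{Q}_p G$; so I would begin by replacing the ground field by $\mathbb{Q}_p$ throughout and focussing on the block algebra $\mathbb{Q}_p G e_B$, where $e_B$ is the central idempotent associated to the $p$-block $B$. Because the defect group is cyclic, I would then bring in the Brauer tree description of $B$ (Brauer, Dade, Feit), which labels the edges of a tree $\Gamma_B$ by the irreducible Brauer characters in $B$ and the non-exceptional vertices by ordinary characters in $B$, and yields an explicit walk formula expressing $\chi_o$ as a signed sum of two Brauer characters read off from $\Gamma_B$. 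The key combinatorial consequence is that every decomposition number in $B$ is $0$ or $1$.

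With that structure in hand, the argument becomes one about Galois orbits. The absolute Galois group $\mathrm{Gal}(\overline{\mathbb{Q}_p}/\mathbb{Q}_p(\chi_o))$ acts on the irreducible Brauer characters of $G$, preserves blocks, and fixes $\chi_o$ setwise; by the tree formula it must therefore permute the (one or two) Brauer characters appearing in the decomposition of $\chi_o$. The $\phi$-orbit under this action has size exactly $[\mathbb{Q}_p(\chi_o,\phi):\mathbb{Q}_p(\chi_o)]$. I would then identify this orbit size with the Schur index by a Fong-style argument: the simple summand of $\mathbb{Q}_p G e_\chi$ has the form $M_r(D)$ where $D$ is an unramified extension of $\mathbb{Q}_p(\chi_o)$, since the cyclic defect hypothesis forces the residue extension $\mathbb{Q}_p(\chi_o,\phi)/\mathbb{Q}_p(\chi_o)$ to be unramified, and the degree of $D$ over its centre is precisely the size of this Galois orbit. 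The formula $m_p(\chi) = [\mathbb{Q}_p(\chi_o,\phi):\mathbb{Q}_p(\chi_o)]$ then drops out.

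The main obstacle, I expect, is the last identification: one has to show that the Schur index is captured exactly by the Galois orbit size of $\phi$, and not merely bounded by it. This is where the specific features of blocks with cyclic defect matter most, and I would handle it using two facts in combination. First, the Brauer group of a $p$-adic field is $\mathbb{Q}/\mathbb{Z}$ and is detected by the residue degree of the division algebra part, so the problem reduces to comparing residue fields generated by ordinary versus modular character values. Second, the decomposition numbers being $0$ or $1$ means the ordinary representation lifting $\phi$ is realisable over the same unramified extension of $\mathbb{Q}_p(\chi_o)$ as $\phi$ itself, and no smaller field suffices, which pins down the index exactly. Piecing these together gives the stated formula.
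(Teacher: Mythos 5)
First, a caveat about what there is to compare against: the paper does not prove this statement. It is Benard's theorem, imported verbatim from \cite{B} and used as a black box inside {\tt LocalIndexAtPByBrauerCharacter}, so there is no internal argument in the paper to measure your proposal against; I can only assess your sketch against the known proof in the literature.

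Your outline assembles the right ingredients --- reduction to the block algebra over $\mathbb{Q}_p$, the Brauer tree, decomposition numbers equal to $0$ or $1$, the Galois action on irreducible Brauer characters, and the fact that central division algebras over a local field are classified by the degree of their unramified splitting field --- and the \emph{upper} bound $m_{\mathbb{Q}_p}(\chi) \mid [\mathbb{Q}_p(\chi_o,\phi):\mathbb{Q}_p(\chi_o)]$ does come out of considerations of this kind, since the Schur index divides $d_{\chi\phi}\,[\mathbb{Q}_p(\chi_o,\phi):\mathbb{Q}_p(\chi_o)]$ and $d_{\chi\phi}=1$ here. But there are two genuine problems. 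A minor one: the ``walk formula expressing $\chi_o$ as a signed sum of two Brauer characters'' misstates the tree combinatorics; $\chi_o$ is the unsigned sum of the Brauer characters labelling the edges incident to the vertex of $\chi$, there may be more than two such edges, and the exceptional vertex requires separate treatment. The major one: the entire content of Benard's theorem is the \emph{lower} bound, and your proposed justification (``the ordinary representation lifting $\phi$ is realisable over the same unramified extension \dots and no smaller field suffices'') asserts precisely what must be proved. Realisability of a lift of $\phi$ over some field gives no lower bound on the index of the simple component belonging to $\chi$; one has to actually compute the Hasse invariant, for instance by reducing a maximal order in that component modulo $p$ and identifying the residue division algebra, and it is exactly here that the cyclic-defect hypothesis (through the uniserial module structure and the Green correspondence) does the real work in \cite{B}. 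You also silently identify the centre of the simple component, which is $\mathbb{Q}_p(\chi)$, with $\mathbb{Q}_p(\chi_o)$; these need not coincide a priori, and the discrepancy must be accounted for. As written, the argument establishes only an upper bound on the local index.
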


Provided the order of $G$ is small enough, it is straightforward to use GAP's Brauer character records to find the values of an irreducible Brauer character $\phi$ of $G$ lying in the same $p$-block of $G$.  To use Benard's theorem, we convert the values of $\chi_o$ and $\phi$ to lie in a finite field extension.  This relies on fixing an isomorphism from the group of $p$-regular roots of unity in $\mathbb{Q}(\zeta_n)$ into the multiplicative group of a finite field of characteristic $p$.   

Benard's theorem only gives correct results when the field $F$ is contained in the field of character values. To compute the $p$-local index over an abelian number field $F$ for which $\mathbb{Q}(\chi) \subsetneq F(\chi)$, we apply a theorem of Yamada (see \cite[Theorem 9.2]{F}).

\begin{thm}[\cite{Y}]
Let $K$ be a finite extension of $\mathbb{Q}_p$, and let $\chi \in Irr(G)$.  If $L$ is a finite extension of $K(\chi)$ for which $m_{K}(\chi) | [L:K(\chi)]$, then $m_L(\chi) = 1$.
\end{thm}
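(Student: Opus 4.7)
The plan is to reduce to the structure of the Brauer group of a non-archimedean local field. Since $K$ is a finite extension of $\mathbb{Q}_p$, so is $K(\chi)$, and local class field theory supplies an isomorphism $\mathrm{inv}\colon \mathrm{Br}(K(\chi)) \xrightarrow{\sim} \mathbb{Q}/\mathbb{Z}$. The Schur index $m_K(\chi)$ is by definition the index of the central simple $K(\chi)$-algebra $A$ obtained as the $K(\chi)$-part of the $p$-adic completion of the simple component of $\mathbb{Q}G$ afforded by $\chi$. Over a local field the index coincides with the order of the Brauer class, so $\mathrm{inv}(A) \equiv a/m \pmod{\mathbb{Z}}$ for some integer $a$ coprime to $m := m_K(\chi)$.

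The second ingredient is the classical base-change formula for local Brauer invariants: for every finite extension $L/K(\chi)$ of degree $n$,
\[
\mathrm{inv}_L\bigl(A \otimes_{K(\chi)} L\bigr) \;\equiv\; n \cdot \mathrm{inv}(A) \;\equiv\; \frac{na}{m} \pmod{\mathbb{Z}}.
\]
Under the hypothesis $m \mid n$, this quantity lies in $\mathbb{Z}$, so $A \otimes_{K(\chi)} L$ represents the trivial class in $\mathrm{Br}(L)$ and is therefore split. Since $m_L(\chi)$ is the index of the base-changed algebra over $L$, we conclude $m_L(\chi) = 1$.

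The only genuine subtlety is the identification of $m_L(\chi)$ with the index of $A \otimes_{K(\chi)} L$: one must check that extending scalars from $K(\chi)$ to $L$ commutes with passing to the simple-component functor, so that $m_L(\chi)$ really is the Schur index of this base change and not of some other completion. Once this identification is in hand, the theorem reduces entirely to the Hasse invariant isomorphism, its degree-multiplicativity under restriction, and the elementary observation $m \mid n \Rightarrow na/m \in \mathbb{Z}$. There is no serious combinatorial or arithmetic obstacle; the whole content is packaged in the two structural facts of local class field theory recalled above.
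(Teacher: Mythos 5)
The paper does not prove this statement; it is quoted as a known theorem of Yamada, with the reference \cite[Theorem 9.2]{F}, so there is no internal proof to compare against. Your argument is the standard one and is correct: over a non-archimedean local field the Hasse invariant gives $\mathrm{Br}(K(\chi))\cong\mathbb{Q}/\mathbb{Z}$ with index equal to exponent, restriction along $L/K(\chi)$ multiplies the invariant by $[L:K(\chi)]$, and $m\mid n$ kills the class. You also correctly isolate the one point that genuinely needs checking, namely that $m_L(\chi)$ is the index of $A\otimes_{K(\chi)}L$; this is the standard compatibility $A(\chi,L)\cong A(\chi,K)\otimes_{K(\chi)}L(\chi)$ of simple components under field extension (see \cite{Y} or \cite{F}), so your proof is complete once that citation is supplied.
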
 

To apply this theorem in cases where $F$ is larger than the field of character values, we need to find the degree $d$ of the $p$-local extension corresponding to the global extension $F/\mathbb{Q}(\chi)$.  This will be $d = e(F/\mathbb{Q}(\chi),p) f(F/\mathbb{Q}(\chi),p)$, which we find using our cyclotomic reciprocity functions.   Then we divide the $p$-local index of $\chi$ over the field of character values by $gcd(m_{\mathbb{Q}_p}(\chi),d)$ to get the $p$-local index of $\chi$ over $F$. 

\smallskip
Although the $p$-defect group $D$ of the $p$-block of $\chi = ${\tt Irr(G)[s]} can not yet be calculated precisely in GAP, it is well-known that 

\smallskip
- $D$ is the intersection of a Sylow $p$-subgroup $P$ of $G$ with its conjugate by a $p$-regular element of $G$; and 

\smallskip
- $D$ is the Sylow $p$-subgroup of the centralizer in $G$ of a $p$-regular element of $G$.  

\smallskip
\noindent The command {\tt PossibleDefectGroups(G,s,p)} determines a list of conjugacy classes of the $p$-subgroups of $G$ satisfying both of these conditions.  (We thank Michael Geline for suggesting this practical approach to the calculation of defect groups in GAP.)   We then check if representatives of all of these conjugacy classes are cyclic, and if so we proceed to compute the $p$-local index by the Brauer character method.  

If the $p$-defect group $D$ is not, or may not be, cyclic, the Brauer character calculation of the $p$-local index is unreliable.  (An exception is a recent theorem of Geline \cite{G}, which shows  $m_{\mathbb{Q}_2}(\chi)=1$ when $G$ is solvable and the $2$-defect group of the block containing $\chi$ is abelian.)   When $p$ is odd, we can avoid this situation by calculating the $p$-local index one prime part at a time.  To calculate the largest power of $q$ that divides the $p$-local index of $\chi \in Irr(G)$, we first find $A_K$ where $K$ is the maximal $p$-split subextension $K$ of $L=F(\zeta_n)$.  For each prime $q$ dividing both $|G|$ and $p-1$, find the unique subfield $K_q$ of $L$ containing $K$ for which $[K_q:K]$ is coprime to $q$ and $[L:K_q]$ is a $q$-power.  The $p$-local index of $A_{K_q}$ is equal to the $q$-part of the $p$-local index of $A$, and the defining group of $A_{K_q}$ has a cyclic Sylow $p$-subgroup.  

\medskip
\noindent {\bf Remark.}  As it has not been required in the computation of Schur indices of groups of order up to 511, a command implementing the ``one prime part at a time'' algorithm to calculate the $p$-local index of $\chi$ has yet to be developed in {\tt wedderga}.   The steps necessary for this reduction can be manually performed one at a time in the current version, however. 

\section{$2$-local indices via the classification of dyadic Schur groups} 

\noindent {1. {\tt LocalIndexAtTwoByCharacter.}} When $p = 2$, the $2$-local index of the cyclotomic algebra $A = (K(\zeta_n)/K,\alpha)$ can only divide $2$, so we can again arrange that the factor set of $A$ consists only of $2$-power roots of unity.  As above we find the maximal $2$-split subextension $K$ of $F$, then the unique subfield $K_2$ of $L=K(\zeta_n)$ containing $K$ for which $[K_2:K]$ is odd and $[L:K_2]$ is a power of $2$.  If possible, we then replace $n$ with a $n'<n$ with $K_2(\zeta_{n'})=L$ , and re-calculate {\tt wedderga}'s {\tt-Info} presentation of $A_{K_2}$ so that it uses a primitive $n'$-th root of 1.  As discussed earlier, the $2$-local index of $A_{K_2}$ cannot be $2$ unless $4$ divides $n'$ and $\zeta_4 \not\in K_2$.   The defining group of $A_{K_2}$ is an extension of a cyclic group by an abelian $2$-group with at most $3$ generators, at most one for the unramified part of the corresponding $2$-local extension, and at most two for the ramified part.   There is an automorphism $\sigma_b \in Gal(L/K_2)$ which inverts $\zeta_4$ and fixes $\zeta_{n'_{2'}}$.  If there are $2$ generators for the ramified part, then $n'_2>4$ and the other generator would have to be an automorphism of $L/K_2$ fixing $\zeta_{4n'_{2'}}$ for which $\zeta_{n'_2} \mapsto \zeta_{n'_2}^{5^k}$ for some integer $k$.  If $E$ is the subfield of $L$ fixed by this automorphism, then the norm of the extension $L/E$ will map $\langle \zeta_{n'_2} \rangle$ surjectively onto $\langle \zeta_4 \rangle$ (see \cite[Theorem 1]{J}).   The field $E$ is now of the form $E=K_2(\zeta_{4n_{2'}})$.  Since the residue degree of $E/K_2$ is a power of $2$, by minimality we can assume $n_{2'}$ is a prime $q$.   It then follows from \cite[Theorem 1]{J} that the cyclotomic algebra $A_{K_2}$ is equivalent to a cyclotomic algebra of the form $(E/K_2,\alpha)$, with the factor set $\alpha$ taking values in $\langle \zeta_{4} \rangle$.   

\medskip
\noindent {\bf Remark.}  As it has not been required to calculate Schur indices for groups of order up to 511, a subroutine to carry out the norm reduction from $L/K_2$ to $E/K_2$ described above has yet to be implemented in the current version of {\tt wedderga}.   In fact it may be the case that the bottom-up nature of the search routines used in the Brauer-Witt reduction make this step unnecessary in practice - the cyclotomic algebra that {\tt wedderga} obtains over the maximal $2$-split subextension may only have $1$- or $2$-generated Galois groups.  This is the case if one uses a ``one prime at a time" strategy in the traditional Brauer-Witt theorem, but the cyclotomic algebras produced by {\tt wedderga} are not exactly done with this method.  If $3$-generated galois groups over the $2$-split subextension do occur, this issue will have to be dealt with in a future version of the package.   With the current package, a manual implementation of the ``one prime at a time" strategy can be carried out, so calculation of the $2$-local index, while not automatic, would still be possible.     

\smallskip
Even when we have reduced the $2$-local index calculation of $\chi$ to that of $(E/K_2,\alpha)$, it is often still not the case that the defect group of the $2$-block containing the defining character is cyclic, or even abelian.  When this defect group is abelian, the theorem of Geline says the $2$-local index of $\chi$ is $1$.  If the defect group is not guaranteed to be abelian, we appeal to the classification of dyadic Schur groups, which we now describe. 

\medskip
\noindent 2. {\tt IsDyadicSchurGroup.}  This process should only be used when $(E/K_2,\alpha)$ is not a cyclic cyclotomic algebra, and it should be the case that $E=K_2(\zeta_{4q})$ for some odd prime $q$, $\zeta_4 \not\in K_2$, $|Gal(E/K_2)|$ is a power of $2$, $\alpha \subseteq \langle \zeta_4 \rangle$, and the defect group of the corresponding $2$-block is not guaranteed to be abelian.   In this case the defining group and defining character for $(E/K_2,\alpha)$ coincide with a terminal Brauer-Witt reduction (as explained in the next paragraph) for the character $\chi$ in the terminology of \cite{S}.  By \cite{S} and \cite{RS}, the $2$-local index of $(E/K_2,\alpha)$ will be $2$ in this case if and only if the structure of the defining group matches one of two types of {\it dyadic Schur groups} whose faithful characters lie in $2$-blocks with nonabelian defect groups, and the $2$-local index of the defining character remains nontrivial over $K_2$.   

Let $p$ be a prime dividing $|G|$ and $q$ be a prime dividing $\chi(1)$. A {\it Brauer-Witt reduction} for the $q$-part of the $p$-local index of $\chi \in Irr(G)$ is a pair $(H,\xi)$ formed by an irreducible character $\xi$ of a subgroup $H$ of $G$, with the property that both $(\chi_H,\xi)$ and $[\mathbb{Q}_p(\chi,\xi):\mathbb{Q}_p(\chi)]$ are not divisible by $q$.  These conditions ensure that the $p$-local index of $\xi$ is equal to the $q$-part of the $p$-local index of $\chi$.   The pair $(H,\xi)$ is a {\it terminal Brauer-Witt reduction} for (the $q$-part of the $p$-local index of) $\chi$ when $(H,\xi)$ is a Brauer-Witt reduction, but no proper subgroup of $H$ can be used in a Brauer-Witt reduction for $\xi$.  A {\it Schur group} is a group $H$ with a faithful irreducible character $\xi$ for which there is a group $G$ and character $\chi$ of $G$ for which $(H,\xi)$ is the terminal Brauer-Witt reduction for the $q$-part of the $p$-local index of $\chi$.  The main results of \cite{S} show that for every irreducible character $\xi$ of a finite group $G$, and for every suitable pair of primes $p$ and $q$, there is a subgroup $H$ and $\xi \in Irr(H)$ for which $(H,\xi)$ is a terminal Brauer-Witt reduction for the $q$-part of the $p$-local index of $\chi$.  Schmid organized Schur groups into 7 different structural types, and for each of these types, gave a formula $p$-local index of its unique faithful irreducible character $\xi$.  The {\it dyadic Schur groups} are the Schur groups $H$ for which the $2$-local index of their faithful irreducible character $\xi$ is $2$.  

We will use Riese and Schmid's characterization of dyadic Schur groups from \cite{RS}, which requires a careful definition of dyadic $2$-groups.  We say that a $2$-group $P$ is {\it dyadic} if $P'$ is cyclic of order at least $4$ and the centralizer $Y$ of the subgroup $Z$ of order $4$ in $P'$ has the property that $Y/Z$ is cyclic \cite[Lemma 3]{RS}.   

\begin{thm}[\cite{RS}, Lemma 4]
Suppose $H$ is a dyadic Schur group and $H \ne Q_8$.   Then $H \simeq U \rtimes P$, where $U \simeq C_q$ is cyclic of prime order $q$, and $P$ is a $2$-group.  Let $X=C_P(U)$.  Then one of the following holds: 

a) $H$ is of type $(Q_8,q)$: $X \simeq Q_8$, and $P$ is the central product of $X$ and $C_P(X)$; or 

b) $H$ is of type $(QD,q)$: $X$ is a generalized quaternion $2$-group of order $\ge 16$ or a dihedral $2$-group, and $P$ is a dyadic $2$-group with $|X/P'| = 2$.  
\end{thm}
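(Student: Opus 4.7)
The approach is to invoke Schmid's general classification of Schur groups as a starting point and then extract the additional constraints imposed by requiring the 2-local index to equal $2$. From the structural theory of Schur groups already used above, any Schur group $H$ at the prime $p$ is (after the usual reductions) a split extension $U \rtimes P$ with $U$ cyclic of some prime order $q \ne p$ and $P$ a $p$-group. Since dyadic means $p = 2$, the premise $H \ne Q_8$ forces $U$ nontrivial and $q$ odd, giving the general form of $H$ immediately.

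Next I would analyse $X = C_P(U)$. Because $U \simeq C_q$ has cyclic automorphism group $C_{q-1}$, the quotient $P/X$ embeds into $C_{q-1}$ and is therefore a cyclic $2$-group. By Clifford theory relative to the normal subgroup $U$, the faithful character $\xi \in \mathrm{Irr}(H)$ is induced from a faithful character of the stabiliser $UX$, and the simple component of $\mathbb{Q}H$ corresponding to $\xi$ is cohomologically built from the action of $P/X$ on a cyclotomic extension together with a factor set on $X$. After the reductions of the previous section we may assume the factor set takes values in $\langle \zeta_4 \rangle$, so the condition $m_2(\xi) = 2$ becomes a concrete statement: the class of this factor set in $H^2(X, \langle \zeta_4 \rangle)$ remains nontrivial after localisation at $2$.

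The third step is to translate this nonsplitness into structural information about $X$. The Schur multiplier of $X$ (or more precisely the $2$-part of it, seen through Witt's theorem as in the proof of Lemma 4 of the excerpt) must contain an element whose restriction detects the obstruction; the only $2$-groups that can produce such an obstruction with an order-$2$ factor set are either $Q_8$ itself, or the generalised quaternion groups of order $\ge 16$, or the dihedral $2$-groups. This dichotomy separates the conclusion into cases (a) and (b): if $X \simeq Q_8$ we are in the $(Q_8, q)$ case, otherwise $X$ is generalised quaternion of order $\ge 16$ or dihedral, so we are in the $(QD,q)$ case.

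The hardest part, and the main obstacle, is pinning down the structure of $P$ in each case. In case (a) one has to show that $P$ is the central product of $X$ and $C_P(X)$; this amounts to proving that the outer part of $P$ centralises $X$ modulo its centre, which one deduces from the fact that $\mathrm{Out}(Q_8)$ has no nontrivial $2$-element acting on the centre and from the minimality imposed by $H$ being a \emph{terminal} Brauer-Witt reduction. In case (b) one must verify that $P$ is dyadic in the sense defined before the theorem, i.e.\ that $P'$ is cyclic of order $\ge 4$ and that the centraliser $Y$ of the order-$4$ subgroup $Z \le P'$ satisfies $Y/Z$ cyclic, and then that $|X/P'| = 2$. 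This is essentially a long case analysis on extensions of the given quaternion or dihedral $X$ by the cyclic $2$-group $P/X$, constrained by the requirement that the whole group is still a Schur group with the prescribed faithful character; the compatibility of the commutator structure of $P$ with the Galois action on $\langle \zeta_4 \rangle$ is what makes the index equal exactly $2$.
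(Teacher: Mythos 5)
The paper does not prove this statement at all: it is imported verbatim as Lemma~4 of Riese and Schmid \cite{RS}, and the only original content the author adds is the remark that the exclusion of $X \simeq Q_8$ from type $(QD,q)$ must be inferred from a careful reading of the proof in \cite{RS}. So there is no in-paper argument to compare yours against; your proposal has to stand on its own as a reconstruction of the Riese--Schmid proof, and as such it is an outline of a strategy rather than a proof. The overall plan --- start from Schmid's classification, reduce to $U \rtimes P$ with $P/X$ cyclic via the embedding into $\mathrm{Aut}(C_q)$, use Clifford theory to localise the obstruction in a factor set with values in $\langle \zeta_4\rangle$, and then read off the structure of $X$ and $P$ --- is indeed the right shape of argument. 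But every step that actually carries the weight of the theorem is asserted rather than established.

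Concretely: (1) the claim that the only $2$-groups $X$ capable of supporting the nonsplit obstruction are $Q_8$, generalized quaternion of order $\ge 16$, or dihedral is the heart of the matter, and you give no argument for it; in particular you do not explain why cyclic, semidihedral, or other $2$-groups are excluded, which is exactly where Witt's theorem ($\zeta_4 \notin K$ for the relevant $2$-local field) and the cohomological computation must be deployed. (2) Your justification for the central product structure in case (a) --- that $\mathrm{Out}(Q_8)$ has no nontrivial $2$-element acting on the centre --- does not work as stated: $\mathrm{Out}(Q_8) \simeq S_3$ certainly contains involutions, and every automorphism of $Q_8$ acts trivially on its centre $C_2$, so the condition you invoke is vacuous and cannot force $P = X \cdot C_P(X)$. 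The actual argument must rule out elements of $P$ inducing outer automorphisms of $X$ by showing the resulting extension either fails to be a (terminal) Schur group or kills the $2$-local index. (3) The entire verification that $P$ is dyadic with $|X/P'| = 2$ in case (b) is deferred to ``a long case analysis'' that is not carried out. Since the theorem is precisely the output of that case analysis, the proposal as written does not constitute a proof; it is a correct table of contents for one.
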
  

\noindent {\bf Remark.} The condition that $X$ not be allowed to be $Q_8$ for $H$ to have type $(QD,q)$ can be inferred from a careful reading of the proof of \cite[Lemma 4]{RS}.  

\medskip
If the defining group $H$ is of type $(Q_8,q)$, then the simple component generated in {\tt wedderga} is actually a cyclic cyclotomic algebra, since $H \simeq C_{4q} : C_{2^s}$ if the action of $P$ on $U$ has order $2^s$.  (We use the notation $X:H$ to indicate a non-split extension of $X$ by $H$.)  So in these cases we can compute the $2$-local index directly once we have reduced to the field $E$.  If the defiining group $H$ is of type $(QD,q)$,  we check that its Sylow $2$-subgroup has the structure of a dyadic $2$-group, then we check the conditions that $X=C_P(U)$ is generalized quaternion or dihedral and that $X/P'$ has order $2$.  

Finally we have to check that the field $K_2$ does not split the algebra.  This step is not needed when computing indices of group algebras over the rationals, but it has to be dealt with when working over abelian number fields that are larger than the field of character values.  As we had shown in the discussion concerning Benard's theorem, this will be the case if and only if $e(K_2/\mathbb{Q}(\chi),2)f(K_2/\mathbb{Q}(\chi),2)$ is odd. 

\medskip
\noindent {\bf Example.} Nontrivial dyadic Schur groups of type $(QD,q)$ already need to be considered for groups of order $48$.  For example, consider the small groups of order 48 numbered 15 through 18 in the GAP library.  These each have a single faithful rational-valued irreducible character $\chi$ of degree $4$.  The local indices of these characters is as shown in the following table:   
 
$$ \begin{array}{ccccc}
\mbox{ Group }  & \qquad P \qquad & \qquad X \qquad & \qquad P' \qquad & \mbox{ Local indices of } \chi \\ \hline
(48,15) & X \rtimes C_2 & D_8 & C_4 & \mathtt{[[2,2],[3,2]]} \\
(48,16) & X:C_2 & D_8 & C_4  & \mathtt{[[2,2],[\infty,2]]} \\
(48,17) & X \rtimes C_2 & Q_8 & C_4 & \mathtt{[ \quad ]} \\
(48,18) & X:C_2 & Q_8 & C_4 & \mathtt{[[3,2],[\infty,2]]}
\end{array}$$

\noindent The first two of these are dyadic Schur groups of type $(QD,q)$, the second pair are not dyadic Schur groups.  Local indices for these characters can also be checked using the functions provided in the next section. 

\section{Additional tools for cyclic and quaternion algebras} 

\noindent 1. {\tt LocalIndicesOfRationalQuaternionAlgebra.}  GAP 4 includes built-in functions for several special kinds of algebras, including generalized quaternion algebras $(\frac{a,b}{K})$ over abelian number fields $K$, which are entered as {\tt QuaternionAlgebra(K,a,b)} for $a$, $b \in K^{\times}$.  This is the $4$-dimensional central simple algebra over $K$ generated by elements $i$, $j$ with the relations $i^2=a$, $j^2=b$, and $ij = -ji$.   

It is often desirable to know when a generalized quaternion algebra is a division algebra, i.e.~when its Schur index is $2$.  For generalized quaternion algebras over $\mathbb{Q}$, there is a traditional  algorithm for calcuating their $p$-local indices  that makes use of the Legendre symbol.  It is outlined in the book of Pierce \cite[page 366]{P}.  One first obtains a tensor factorization of $(\frac{a,b}{\mathbb{Q}})$ into quaternion algebra factors of the form $(\frac{c,d}{\mathbb{Q}})$ where both $c$ and $d$ are either prime, $1$, or $-1$.   The Legendre symbol at $p$ is then used to directly compute the $p$-local invariants of these algebras.  Since the only nonzero invariants are $\frac12$'s, it is easy to sum them up modulo $\mathbb{Z}$ to get the $p$-local invariant of $(\frac{a,b}{\mathbb{Q}})$ and hence the $p$-local index.  The local index at $\infty$ is $2$ only if both $a$ and $b$ are negative.   If $A$ is a rational quaternion algebra in GAP, {\tt LocalIndicesOfRationalQuaternionAlgebra(A)} returns its list of local indices, and {\tt SchurIndex(A)} returns its rational Schur index.  These functions will only work if the center of the quaternion algebra $A$ is precisely the rationals, and otherwise result in an error.   We have found that using our {\tt SchurIndex(A)} is to be more reliable than GAP's {\tt IsDivisionRing(A)} for deciding when a generalized quaternion algebra over $\mathbb{Q}$ is a division algebra.  

\medskip
{\bf Cyclic algebras and Quadratic algebras.} For quaternion algebras over abelian number fields larger than $K$, we offer functions that convert them into cyclic algebras, whose local indices can be computed either by solving suitable norm equations, or in some special cases by further conversion into a cyclic cyclotomic algebra.   In {\tt wedderga}, we represent the cyclic algebra $(L/K,\sigma,a)$ as simply {\tt [K,L,[a]]}.  A well-known result states that the Schur index of the cyclic algebra $(L/K,\sigma,a)$ is the least power $m$ of $a$ that lies in the image of the norm map $N_{L/K}$.  While GAP offers no command for solving such norm equations, other number theory systems do offer this capability.  For example, one can use  the command {\tt bnfisnorm(F,x,\{flag=0\})} in PARI/GP \cite{PARI2} to verify whether or not the nonzero rational number $x$ is a norm in the Galois extension $F/\mathbb{Q}$.  (The referee has remarked that the PARI/GP interface provided by GAP's {\tt Alnuth} package might be utilized for future improvements to {\tt wedderga}.)

When $L = K(\sqrt{d})$ for some $d \in K$, we refer to the cyclic algebra as a quadratic algebra.  It is easy to convert directly between quaternion algebras and quadratic algebras: $(K(\sqrt{d})/K,\sigma,a)$ is equal to $(\frac{d,a}{K})$, and vice versa.  If the cyclic algebra $(L/K,\sigma,a)$ has the form $(F(\zeta_n)/F,\sigma_b,\zeta_n^c)$, then we can convert it directly into the cyclic cyclotomic algebra with {\tt wedderga} presentation {\tt [1,F,n,[$[F(\zeta_n):F]$,b,c]]}.  We provide functions in {\tt wedderga} for all of these conversions.  It would also be desirable to be able to convert between our algebras and algebras with structure constants, or finitely presented algebras.  Such functions will be considered for a future release of the package.  

\medskip
\noindent 2. {\tt DecomposeCyclotomicAlgebra}.  Another way to attack the problem of calculating Schur indices for non-cyclic cyclotomic algebras resulting from {\tt wedderga}'s {\tt -Info} functions is decompose them as tensor products of cyclic algebras, whose local indices can be calculated using norm equations or other methods.  Given a cyclotomic algebra {\tt [r,F,n,[[a$_1$,b$_1$,c$_1$],[a$_2$,b$_2$,c$_2$]],[[d]]]}, whose abelian galois group has $2$ generators, we think of the algebra as $\oplus_{i,j} L u^i v^j$, where $u$ and $v$ satisfy the relations given in section 2.    Let $L = F(\zeta_n)$.  One way to decompose the algebra as a tensor product of cyclic algebras is to find a nonzero scalar $c$ in $L^{\times}$ for which $v(cu) = (cu)v$ and $(cu)^{a_1} \in F$.   Once this is found, the cyclotomic algebra decomposes as $r \times r$ matrices over 
$$ (L^{\langle \sigma_{b_2} \rangle}/F, \sigma, (cu)^{a_1}) \otimes_F (L^{\langle \sigma_{b_1} \rangle}/F, \phi, v^{a_2}). $$
Alternatively, we can leave $u$ alone and replace $v$ by a suitable scalar in $L^{\times}$, or even adjust both by a scalar simultaneously.   
The fixed fields $L^{\langle \sigma_{b_i} \rangle}$ are easily found by using the command $NF(n,[1,b_i])$ in GAP when $L = \mathbb{Q}(\zeta_n)$, and if $L$ is otherwise we find $NF(n,[1,b_i])$ and extend it by a primitive element of $F$. 

The special cases where $\zeta_n^d \in \langle \zeta_4 \rangle$ and $\zeta_4 \not\in F$ occur most often for small groups.  To achieve the decomposition in these cases, we replace $u$ and $v$ by the $u'$ and $v'$ in the following table:  

$$\begin{array}{ccccc}
\zeta_n^d & \zeta_4^{b_1} & \zeta_4^{b_2}& u' & v' \\ \hline
1 & * & * & u & v \\
-1 & \pm \zeta_4 & \zeta_4 & u & \zeta_4 v \\
-1 & \pm \zeta_4 & -\zeta_4 & \zeta_4 u & v \\
\zeta_4 & \zeta_4 & -\zeta_4 & (1-\zeta_4)u & v \\
\zeta_4 & -\zeta_4 & \zeta_4 & u & (1+\zeta_4) v \\
\zeta_4 & -\zeta_4 & -\zeta_4 & (1-\zeta_4)u & \zeta_4 v \\
-\zeta_4 & \zeta_4 & -\zeta_4 & (1+\zeta_4)u & v \\
-\zeta_4 & -\zeta_4 & \zeta_4 & u & (1 - \zeta_4) v \\
-\zeta_4 & -\zeta_4 & -\zeta_4 & (1+\zeta_4)u & \zeta_4 v 
\end{array}$$

\smallskip
\noindent {\bf Example.}  An example of the above is provided by the decomposition of the cyclotomic algebras resulting from the small groups of order 48 numbered 15 to 18 in the GAP library.  Each of these groups $G$ produces a simple component in $\mathbb{Q}G$ that is a non-cyclic cycoltomic algebra.  After loading {\tt wedderga}, the reader is invited to generate these examples for themselves using these commands 

\medskip
{\tt gap> R:=GroupRing(Rationals,SmallGroup(48,15));;}

{\tt gap> W:=WedderburnDecompositionInfo(R);;}

{\tt gap> DecomposeCyclotomicAlgebra(W[10]);}

\smallskip
\noindent The following table gives the cyclic algebra decompositions of these four components. 

$$\begin{array}{ccc} 
\mbox{Group} & \mbox{Simple Component} & Tensor Factors  \\ \hline
(48,15) & \mathtt{[1,Rationals,12,[[2,5,9],[2,7,0]],[[9]]]} & (\mathbb{Q}(\zeta_3)/\mathbb{Q},2) \otimes (\mathbb{Q}(\zeta_4)/\mathbb{Q}, 1) \\
(48,16) & \mathtt{[1,Rationals,12,[[2,5,3],[2,7,0]],[[9]]]} & (\mathbb{Q}(\zeta_3)/\mathbb{Q},-2) \otimes (\mathbb{Q}(\zeta_4)/\mathbb{Q}, 1) \\
(48,17) & \mathtt{[1,Rationals,12,[[2,5,3],[2,7,6]],[[9]]]} & (\mathbb{Q}(\zeta_3)/\mathbb{Q},-2) \otimes (\mathbb{Q}(\zeta_4)/\mathbb{Q}, -1) \\
(48,18) & \mathtt{[1,Rationals,12,[[2,5,3],[2,7,6]],[[3]]]} & (\mathbb{Q}(\zeta_3)/\mathbb{Q},2) \otimes (\mathbb{Q}(\zeta_4)/\mathbb{Q}, -1) 
\end{array}$$

As one can see, all the tensor factors in the above example can directly converted into quaternion algebras, since $\mathbb{Q}(\zeta_3)=\mathbb{Q}(\sqrt{-3})$ and $\mathbb{Q}(\zeta_4)=\mathbb{Q}(\sqrt{-1})$.   Their local indices can be calculated directly with the Legendre symbol using {\tt LocalIndicesOfRationalQuaternionAlgebra}.  This makes it possible to directly verify the Schur index calculations done previously with characters and the dyadic Schur group classification.  

\medskip
To decompose the cyclotomic algebra {\tt [r,F,n,[[a$_1$,b$_1$,c$_1$],[a$_2$,b$_2$,c$_2$]],[[d]]]} when $\zeta_n^d \not\in \langle \zeta_4 \rangle$, the {\tt DecomposeCyclotomicAlgebra} function computes a scalar $c$ for adjusting $u$.  Let $m$ be the order of $\zeta_n^d$, and write $\zeta_n^d = \zeta_m^t$.  To find $c$ so that $v(cu) = (cu)v$, we need to find $c$ so that $c^{\sigma_{b_2}}\zeta_m^{tb_1b_2}=c$.  We try to construct a candidate for $c$ directly.  Start with the incomplete equation
$$ (1 + \dots) \zeta_m^{tb_1b_2} = (\zeta_m^{tb_1b_2} + \dots).  $$ 
The sum on the right-hand side represents $c$, and the sum on the left-hand side is $c^{\sigma_{b_2}}$.  We apply $\sigma_{b_2}^{-1} = \sigma_{b_2}^r$ to the term on the right-hand side, add that to the left-hand side, and produce a new term on the right-hand side: 
$$ (1 + \zeta_m^{tb_1} + \dots ) \zeta_m^{tb_1b_2} = (\zeta_m^{tb_1b_2} + \zeta_m^{tb_1(b_2+1)} + \dots). $$
Now continue.  Use the new term on the right to produce a new one on the left and another on the right: 
$$ (1 + \zeta_m^{tb_1} + \zeta_m^{tb_1(1+r)} + \dots ) \zeta_m^{tb_1b_2} = (\zeta_m^{tb_1b_2} + \zeta_m^{tb_1(b_2+1)} + \zeta_m^{tb_1(r+1+b_2)} + \dots). $$
And one more time to see the pattern: 
$$ (1 + \zeta_m^{tb_1} + \zeta_m^{tb_1(1+r)} +\zeta_m^{tb_1(r^2+r+1)} \dots ) \zeta_m^{tb_1b_2} = (\zeta_m^{tb_1b_2} + \zeta_m^{tb_1(b_2+1)} + \zeta_m^{tb_1(r^2+r+1+b_2)}\dots). $$
The process will eventually conclude when $\zeta_m^{tb_1(r^n+\dots+r+1)}=1$.  This produces a candidate for $c$ on the right, which will admit a tensor decomposition of the cyclotomic algebra into two cyclic algebra factors as long as $(cu)^{a_1} \in F^{\times}$. 
In the general algorithm for {\tt DecomposeCyclotomicAlgebra} in {\tt wedderga}, the latter condition is checked, and an error results if the condition fails.  Even when this function fails to find a scalar for $u$, the user can adjust the presentation of the cyclotomic algebra so that $v$ is listed before $u$, or so that $v$ has been adjusted by a scalar, and apply the function again to the new presentation.  

It may be desirable in some cases to be able to decompose cyclotomic algebras defined on galois groups with $3$ or more generators.  As of now there is no {\tt wedderga} feature for this.  For groups of order up to 511 this has not been necessary, as passing to the $p$-split subextension has reduced the galois groups to at most $2$ generators.


\begin{thebibliography}{99}

{\footnotesize
\bibitem{B}  M.~Benard, Schur indices and cyclic defect groups, {\it Ann. of Math.}, (2), {\bf 103} (2), (1976), 283-304. 

\bibitem{BS} M.~Benard and M.~Schacher, The Schur subgroup, II, {\it J. Algebra}, {\bf 22} (1), (1972), 378-385. 

\bibitem{W} O.~Broche Cristo, A.~Herman, A.~Konovalov, A.~Olivieri, G.~Olteanu, \'{A}.~del R\'{i}o and I.~van Gelder. {\tt  wedderga} --- Wedderburn Decomposition of Group Algebras, Version 4.6.1; 2013

({\tt http://www.cs.st-andrews.ac.uk/{\tiny $\sim$}alexk/wedderga}).

\bibitem{F} W.~Feit, {\it The Representation Theory of Finite Groups}, North-Holland Publishing Co., 1982. 

\bibitem{G} M.~Geline, Schur indices and abelian defect groups, {\it J. Algebra}, {\bf 319} (10), (2008), 4140-4146.

\bibitem{J} G.~Janusz, Generators for the Schur subgroups of local and global number fields, {\it Pacific J. Math.}, {\bf 56} (2), (1975), 525-546. 

\bibitem{M} W. Bosma, J. J. Cannon, C. Fieker, A. Steel (eds.), Handbook of Magma functions, Edition 2.16 (2010), 5017 pages.

\bibitem{N} G.~Navarro, {\it Characters and Blocks of Finite Groups}, London Mathematical Society, Lecture Note Series no. 250, Cambridge, 1998. 

\bibitem{OdR} G.~Olteanu, \'{A}. del R\'{i}o, An algorithm to compute the Wedderburn decomposition of semisimple group algebras implemented in the GAP package {\tt wedderga}, {\it J. Symbolic Comput.},  {\bf 44} (5), (2009), 507–516.

\bibitem{PARI2} PARI/GP, version {\tt 2.5.3}, Bordeaux, 2012, {\tt http://pari.math.u-bordeaux.fr/}.

\bibitem{P} R.S.~Pierce, {\it Associative Algebras}, Graduate Texts in Mathematics, Vol. 88, Springer-Verlag, 1982. 

\bibitem{R} I.~Reiner, {\it Maximal Orders}, London Mathematical Society Monographs, Vol. 28, Oxford University Press, 2003. 

\bibitem{RS} U.~Riese, P.~Schmid, Schur indices and Schur groups. II, {\it J. Algebra}, {\bf 182} (1), (1996), 183–200.

\bibitem{S} P.~Schmid, Schur indices and Schur groups, {\it J. Algebra}, {\bf 169} (1), (1994), 226–247.

\bibitem{Y} T.~Yamada, The Schur subgroup of the Brauer group, Lecture Notes in Mathematics, no. 184, Springer-Verlag, 1964. 
}

\end{thebibliography}
\end{document}